\newtheorem{thm}{Theorem}
\newtheorem{proposition}[thm]{Proposition}
\newtheorem{lemma}[thm]{Lemma}
\newtheorem{definition}[thm]{Definition}
\begin{document}

\title{The solution to an open problem for a caching game}

\author[*]{Endre Cs\'{o}ka}
\author[**]{Thomas Lidbetter}
\affil[*]{\emph{Mathematics Institute, University of Warwick, Coventry, CV4 7AL, csokaendre@gmail.com}}
\affil[**]{\emph{Department of Mathematics, London School of Economics, Houghton Street, London WC2A 2AE, t.r.lidbetter@lse.ac.uk}}

 \date{}
  \maketitle
  \thispagestyle{empty}
\maketitle

 \begin{abstract}
In a caching game introduced by Alpern \emph{et al.} \cite{AFOL}, a Hider who can dig to a total fixed depth normalized to $1$ buries a fixed number of objects among $n$ discrete locations. A Searcher who can dig to a total depth of $h$ searches the locations with the aim of finding all of the hidden objects. If he does so, he wins, otherwise the Hider wins. This zero-sum game is complicated to analyze even for small values of its parameters, and for the case of $2$ hidden objects has been completely solved only when the game is played in up to $3$ locations. For some values of $h$ the solution of the game with $2$ objects hidden in $4$ locations is known, but the solution in the remaining cases was an open question recently highlighted by Fokkink \emph{et al.} \cite{Fokkink-et-al}. Here we solve the remaining cases of the game with $2$ objects hidden in $4$ locations. We also give some more general results for the game, in particular using a geometrical argument to show that when there are $2$ objects hidden in $n$ locations and $n \rightarrow \infty$, the value of the game is asymptotically equal to $h/n$ for $h \ge n/2$.

 \end{abstract}{\bf Keywords:} search, caching, zero-sum game, accumulation game

\newpage
\setcounter{page}{1}

\section{Introduction}
In \cite{AFOL}, Alpern \emph{et al}. introduced a new type of search game called a \emph{caching game}. A caching game is a zero-sum game between a Searcher and a Hider in which the Hider has some material that he wishes to hide or \emph{cache} in several possible hiding places. The Hider could be a terrorist caching weapons or explosives, or, as in \cite{AFLC}, the Hider could be an animal such as a squirrel, caching nuts or some other food. The Hider's aim is to end up with a certain minimal amount of material. In the case of a terrorist, this may be a minimal amount required to carry out an attack, or in the case of a squirrel it could be a minimal amount of food to survive the winter. The Searcher, who has a limited amount of resources with which to search, has to decide how to distribute these resources about the hiding locations in order to maximize the probability that the Hider will be left with insufficient material.

The caching game we discuss in this paper (defined formally later in Section \ref{sec:main}) takes place in a finite number of locations and the Hider's material takes the form of a finite number of objects, which he can ``bury'' in the locations. He is limited by the total amount that he can ``dig'', but he is permitted to bury multiple objects in the same location. We shall see later that this allows him to mislead the Searcher by using ``decoys''. The Hider's limitation could correspond to a time restriction in the case of a terrorist caching weapons, or an energy restriction in the case of a squirrel burying nuts. The Searcher also has a restriction on the amount he is able to dig. The Searcher can use an \emph{adaptive} strategy (called a \emph{smart} strategy in \cite{AFLC}), meaning that he is not required to specify the depths to which he will dig in each location in advance, but can change his plan based on new information discovered in the course of the search. We model this as a zero-sum win-lose game, where the Hider wins if and only if he is left with at least one object after the Searcher has finished searching.

In general, computing the value of the game and the optimal mixed strategies seems to be hard, not least because both players have infinite strategy sets. In \cite{AFOL}, the game was solved in the case that the Hider hides $2$ objects in $2$ locations and the case that he hides $2$ objects in $3$ locations. For certain values of the energy parameters, Fokkink \emph{et al}. \cite{Fokkink-et-al} describe the solution to the game for $2$ objects in $4$ locations, taken from \cite{OpDenKelder}. In Section \ref{sec:main} of this work we solve the game for $2$ objects in $4$ locations in the previously unsolved cases. We also give some more general results for the game played in an arbitrary number of locations in Section \ref{sec:general}.

\section{Related literature}

Caching games are a natural ancestors of \emph{accumulation games}, introduced by Kikuta and Ruckle (\cite{Kikuta-Ruckle97}, \cite{Kikuta-Ruckle2000} and \cite{Kikuta-Ruckle2002}) and further studied in \cite{Alpern-Fokkink}, \cite{AFK} and \cite{AFP}. Accumulation games, in their most general form, take place between a Hider who accumulates resources in stages over several time periods, and a Searcher who confiscates some of the resources in every period. Caching games are single stage versions of accumulation games, but in the caching game we study here, the amount of material the Searcher can confiscate from a location depends upon the amount of his energy that he dedicates to this locations. This adds an extra layer of complexity on top of traditional accumulation games in which, upon specifying some subset of locations, the Searcher confiscates all the material cached at those locations. The first author also extends caching games by considering ``limit games'' \cite{Csoka} in which the set of locations of the hidden objects is not a finite set but an interval $[0,T]$ or $[0,\infty)$.

Accumulation games and caching games are also related to \emph{inspection games}, as studied, for example in \cite{Avenhaus-et-al}, \cite{Baston-Bostock} and \cite{Ferguson-Melolikdakis}. Inspection games model a situation in which an \emph{inspectee} is legally obliged to comply with some regulation such as an arms control treaty, and an \emph{inspector} wishes to detect a violation. Caching games also have a strong geometric flavor and are related to geometric games of search and ambush, as found in \cite{Garnaev}, \cite{Ruckle} and \cite{Zoroa-et-al}. See \cite{Baston} for some interesting open problems in the area.

This work sits more generally in the field of \emph{search games}, good accounts of which can be found in the monographs \cite{Alpern-Gal} and \cite{Garnaev-book}; and \emph{search theory}, as surveyed in \cite{Benkosi-et-al}.

\section{The model and main results}
\label{sec:main}

In this section we describe the game, as defined in \cite{AFOL}, and give the solution for $2$ objects hidden in $4$ locations in the previously unsolved cases.

\subsection{Game definition and example}

The general form of the game that we consider takes place in $n$ locations. The Hider must choose where to bury $k$ objects in these locations and a strategy for him corresponds to $n$ sets, $(S_1,\ldots,S_n)$ with $\sum_{i \le n}\left\vert{S_i}\right\vert = k$, where each $S_i$ contains the (non-negative) depths at which objects are buried in location $i$. For ``location $i$'' we use the abbreviation $L_i$. The Hider is permitted to hide several objects in the same location, but he is restricted in the total amount he can dig, so that $\sum_{i=1}^n \max\{x:x \in S_i\}$ is no greater than a constant which we normalize to 1. In other words, the sum of the depths at which he buries the deepest object in each location is no greater than $1$. For convenience, if $S_i = \emptyset$ we write $S_i = 0$ and if $S_i=\{d\}$ is a singleton then we write $S_i=d$. For example, if $n=3,k=3$ and there are objects buried at depths $1/2$ and $2/3$ in $L_1$ and an object buried at depth $1/3$ in $L_2$ we write the strategy as $(\{1/2,2/3\},1/3,0)$. 

The Searcher's strategies are more complicated to describe, and we refer to \cite{AFOL} for a precise description. Informally, a Searcher strategy is a plan of how to dig in the locations. He can dig a total distance bounded above by some constant $h \ge 0$, and he can change his digging plan dynamically as he gains new information about the locations of the objects. The Searcher is permitted to dig simultaneously in more than one location (this may be seen as the limit of a strategy in which he alternates between locations, digging a very small amount $\epsilon$ in each one).

The payoff of the game is $1$ if the Searcher discovers all the objects; in this case we say the Searcher wins and the Hider loses. Otherwise, if the Hider is left with at least one object, the payoff is $0$, the Hider wins and the Searcher loses. We assume that $1 \le h < n$, otherwise the solution of the game is trivial. In \cite{AFOL} it was shown that the game has a near value and near optimal strategies (though in previously solved examples of the game, exact optimal strategies are found).

We illustrate the game with a simple example, as described in \cite{AFOL}, in the case of $n=2$ locations and $k=2$ objects. Observe that the Hider can always ensure that the value is at most $1/2$ by choosing randomly between $(1,0)$ and $(0,1)$, since the Searcher cannot dig to a depth of $1$ in both locations. If $h \ge 3/2$, then the Searcher can ensure an expected payoff of at least $1/2$ by digging to depth $1$ in a randomly chosen location and digging to depth $1/2$ in the other location. To see this, observe that if the objects are the in same location, the Searcher wins if he chooses to dig to depth $1$ in this location (which happens with probability $1/2$); if the objects are in different locations, we can assume without loss of generality that they are at depths $x$ in $L_1$ and $y$ in $L_2$ with $y \le 1/2$, in which case the Searcher wins if he digs to a depth of $1$ in $L_1$ (which happens with probability $1/2$). Hence the value of the game is $1/2$ if $h \in [3/2,2)$.

Now consider the same game with $2$ objects hidden in $2$ locations but with $h < 3/2$. In this case, the Hider can guarantee he wins with probability at least $1/3$ by choosing equiprobably between $(1/2,1/2)$, $(1/2,1)$ and $(1,1/2)$. It is clear that the Searcher cannot win against more than $1$ of these strategies. On the other hand, the Searcher can guarantee a win with probability at least $1/3$ by guessing equiprobably between the three possibilities that (i) the objects are in different locations, (ii) they are both in $L_1$ or (iii) they are both in $L_2$. If he guesses correctly, it is easy to see he only needs to dig a total depth of $1$ to win with certainty. Hence the value of the game is $1/3$ if $h \in [1,3/2)$. Note that this strategy relies on the Searcher's ability to adapt his search as he goes along.

We also remark that for the second Hider strategy described above (and trivially the first), after finding the first object, the second object will be optimally placed in the sub-game faced by the Searcher. Fokkink \emph{et al}. \cite{Fokkink-et-al} conjecture that the optimal Hider strategy always has this property (``A Kikuta-Ruckle Conjecture for Caching Games''). Indeed, this property can be seen in the solutions we present in this section, but in \cite{Csoka}, the first author shows the conjecture is not true in general.

The reader will notice that in the proof of the following lemmas, we do not make the assumption that the total depth dug by the Hider is precisely $1$. Counterintuitive as it may seem, it is shown in \cite{Csoka} that for some large values of $n$ the Hider's optimal strategy must with positive probability place the objects at depths whose sum is strictly less than $1$, so we must not neglect these strategies.

\subsection{New results for $2$ objects hidden in $4$ locations}

In \cite{AFOL}, the full solution of the game can be found for $k=2$ and $n \le 3$. In \cite{Fokkink-et-al}, the solution of the game for $n=4$ and $k=2$ is given for all values of $h$ except $h \in [7/4,2)$ and $h \in [11/5,7/3)$. Here we give solutions for these missing cases. The value of the game is the same for all $h \in [11/5,7/3)$, whereas for $h \in [7/4,2)$ the optimal strategies and value depend on which of three sub-intervals $h$ belongs to. Therefore our solution breaks down into four cases. 

We begin by summarizing the results in for the case $n=2,k=4$ in Theorem \ref{thm:summary}, emphasizing new results in bold.

\begin{thm}
\label{thm:summary}
The solution of the game for $2$ objects hidden in $4$ locations is shown in Table \ref{tab:main-summary}.
\begin{table}[ht]
\centering
\begin{tabular}{|c|c|}
\hline
$h$ & Value \\ \hline
$[1,3/2)$ & $1/10$ \\ 
$[3/2,5/3)$ & $3/20$ \\ 
$[5/3,7/4)$ & $1/5$ \\ 
$\textbf{[7/4,9/5)}$ & $\textbf{9/40}$ \\
$\textbf{[9/5,11/6)}$ & $\textbf{7/30}$ \\
$\textbf{[11/6,2)}$ & $\textbf{1/4}$ \\ 
$[2,11/5)$ & $2/5$ \\ 
$\textbf{[11/5,7/3)}$ & $\textbf{9/20}$ \\ 
$[7/3,3)$ & $1/2$ \\ 
$[3,4)$ & $3/4$ \\
\hline
\end{tabular}
\caption{Value of the game for $2$ objects in $4$ locations.}
	\label{tab:main-summary}
\end{table}
\end{thm}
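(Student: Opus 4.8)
The plan is to establish the table row by row, separating the entries already in the literature from the four new (bold) rows. The values for $h \in [1,3/2)$, $[3/2,5/3)$, $[5/3,7/4)$, $[2,11/5)$, $[7/3,3)$ and $[3,4)$ are proved in \cite{AFOL} and in \cite{Fokkink-et-al} (via \cite{OpDenKelder}), so the task reduces to determining the value on the four intervals $[7/4,9/5)$, $[9/5,11/6)$, $[11/6,2)$ and $[11/5,7/3)$. For each of these I would prove a matching pair of bounds in a separate lemma. The \emph{upper bound} (a cap on the value, i.e.\ on the probability the Searcher wins) comes from exhibiting one mixed Hider strategy: an explicit probability distribution supported on a small set of configurations of the two objects among the four locations, chosen so that no Searcher plan finds both objects with probability exceeding the claimed value. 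The \emph{lower bound} comes from an explicit mixed Searcher strategy -- a distribution over (adaptive) digging plans -- that finds both objects with probability at least the claimed value against every Hider configuration. Since the two guarantees coincide, the value is pinned down.

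On the Hider side, the supports will consist of the natural ``decoy'' patterns seen in the $n \le 3$ solutions: two objects at equal depths in two distinct locations, and an object buried relatively deep together with a shallow decoy elsewhere, with the depths and mixing weights tuned so that a Searcher who digs just past depth $1/2$ in one place is forced to commit resources he cannot recoup -- the three sub-intervals of $[7/4,2)$ appear precisely because the optimal depths and weights change as $h$ passes $9/5$ and $11/6$. To verify an upper bound it is enough, by linearity of the payoff in the Searcher's randomization together with a standard domination / extreme-point argument, to check the finitely many undominated pure Searcher responses. The one subtlety is adaptivity: conditioned on where and when the first object is found, the Searcher faces a residual caching game in fewer locations with a reduced budget, whose value is already known (from the $n \le 3$ cases, or by an elementary argument), and one must argue he can do no better than play that subgame optimally.

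On the Searcher side the strategy is likewise adaptive: randomly guess the ``type'' of the Hider's configuration (both objects together, split between two named locations, or one-plus-decoy), dig accordingly, and reallocate the unspent budget the moment the first object surfaces. The verification consists in checking that the leftover budget -- $h$ minus what has already been dug -- always suffices to finish the subgame forced by the guess with the required conditional probability; this is exactly where the endpoints $9/5$, $11/6$, $11/5$ and $7/3$ enter, as the thresholds at which the residual budget crosses the amount needed to clear the remaining locations.

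The main obstacle I expect is twofold. First, correctly guessing the supports and weights of the optimal strategies in each regime: the patterns need not be those that are optimal for $n \le 3$, and since (as the paper notes, citing \cite{Csoka}) the Hider may optimally bury the objects to a total depth strictly less than $1$, one must also entertain such ``sub-budget'' configurations as candidates. Second, making the reduction-to-subgame argument for the adaptive Searcher fully rigorous -- showing that the finite case analysis is genuinely exhaustive in spite of the continuum of possible depths and of stopping times available to the Searcher. Once the right strategies are identified, confirming both inequalities is a finite (if laborious) linear check.
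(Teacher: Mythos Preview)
Your overall architecture matches the paper exactly: four lemmas, one per bold interval, each exhibiting an explicit mixed Hider strategy for the upper bound and an explicit adaptive mixed Searcher strategy for the lower bound, with verification by a finite case analysis. That is precisely how the paper proceeds.

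However, what you have written is a plan rather than a proof, and in two places your intuitions about the missing content diverge from what actually works. First, the optimal Hider supports are not quite the ``decoy'' patterns you sketch. For $h\in[11/6,2)$ the optimal Hider strategy is simply to place \emph{both} objects at depth $1$ in a uniformly random location; there is no split-location component at all, and the value $1/4$ drops out immediately from $h<2$. In the other three intervals the optimal supports are equiprobable mixtures over sets of the form $D(x)$ (objects at depths $x$ and $1-x$ in distinct locations) together with several sets $E(x)$ (objects at depths $x$ and $1$ in the \emph{same} location); the same-location configurations are essential in every case, and the particular values of $x$ (multiples of $1/3$, $1/5$, $1/6$ respectively) do not arise from the $n\le 3$ solutions in any obvious way.

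Second, your proposed route for the upper bound via ``reduction to the residual subgame whose value is already known'' does not work as stated: once the Searcher has found the first object he has already partially excavated several locations, so the remaining problem is not a clean instance of the $k=1$ or $n=3$ game. The paper does not attempt any such reduction. Instead, for the Hider bound it argues that against the given finite support the Searcher may without loss dig only in increments of the smallest depth appearing in the support, reducing to a genuinely finite check of how many of the Hider's pure strategies any single Searcher plan can defeat; for the Searcher bound it specifies the entire two-stage strategy (a fixed digging sequence in randomly labelled locations, followed by an ``intelligent search'' $IS(\sigma)$ in a prescribed order) and verifies directly, by case analysis on the depths $x\ge y$ of the two objects, that it wins with at least the claimed probability. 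The endpoints $9/5$, $11/6$, $11/5$, $7/3$ emerge from these inequalities rather than from residual-budget thresholds of a subgame.
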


We will prove the theorem using $4$ lemmas, corresponding to the $4$ cases, which we will present in increasing order of complexity, beginning with the simplest case, $h \in [11/6,2)$. The proofs are often very similar to one another and involve a systematic checking of cases, in which case we give a detailed exposition the first time and simply sketch the details in later proofs. The players' optimal strategies are of more interest than the proofs themselves. 

We first spend some time discussing how we will describe the optimal strategies for the players, as this is non-trivial. Since we are restricting our attention to the game with $2$ hidden objects, we can describe the Searcher's strategy in two stages, which we will call Stage 1 and Stage 2. Stage 1 specifies what the Searcher does until he finds the first object and Stage 2 describes what he does after finding the first object (if he does). 

To describe Stage 1 we simply give a sequence of vectors of the form $(x_1,x_2,x_3,x_4)$ which signify that at time $x_1+x_2+x_3+x_4$ the Searcher has dug to depth $x_i$ in $L_i$ for $i=1,\ldots,4$. At intermediate times he transforms from one vector to the next in a linear way. For example, the interpretation of the sequence $(1,0,0,0),(1,1/2,0,0),(1,1,1/2,0)$ would be to dig to depth $1$ in $L_1$, then dig to depth $1/2$ in $L_2$, and finally to dig simultaneously to depth $1$ in $L_2$ and to depth $1/2$ in $L_3$.

To describe Stage 2 we will specify some rules telling the Searcher the order in which to proceed to search for the second object after finding the first. Since the depth of the first object gives a natural restriction on the depth of the second object it will be clear how deep to dig in the other locations once the order is established. For example, if the rule is to search for the second object in $L_1$ and then $L_2$ after finding the first object then supposing the first object is found at depth $1/3$ in $L_1$, the Searcher should proceed to dig to depth $1$ in $L_1$ and then depth $2/3$ in $L_2$. We call a Stage 2 search of this type an {\em intelligent search}, abbreviated $IS$, and we denote Stage 2 of the Searcher's strategy in the preceding example by $IS(12)$. Similarly, for any sequence $\sigma$ of locations, we write $IS(\sigma)$ for the intelligent search in Stage 2 which searches for the second object in the locations specified by their order in $\sigma$ at the maximum possible depths they could be in those locations.

To denote the Hider's optimal strategies, we will distinguish between strategies where the objects are in different locations and those where they are in the same locations. For $x \in [0,1]$, let $D(x)$ be the set of all Hider strategies where one object is hidden at depth $x$ in some location and the other object is at depth $1-x$ in some other location. (So if $x \neq 1/2$ then $D(x)$ has size $3 \times 4=12$, and if $x=1/2$ then $D(x)$ has size $12/2=6$.) Also let $E(x)$ be the set of the $4$ Hider strategies where both objects are hidden in the same location at depths $x$ and $1$.

We begin with the case $h \in [11/6,2)$. 

\begin{lemma}
\label{lemma:11/6-2} For $n=4,k=2$ and $h \in [11/6,2)$, the value of the game is $1/4$. The Hider's optimal strategy is to choose equiprobably from the $4$ pure strategies in $E(1)$. 

The Searcher's optimal strategy is to number the locations randomly and proceed as follows.
\begin{enumerate}
\item[Stage 1:] Dig according to the sequence $(1/2,0,0,0),(1/2,1/2,0,0),(1,1/2,0,0)$.
\item[Stage 2:] If an object is found in $L_1$, follow $IS(1234)$; if an object is found in $L_2$, follow $IS(234)$.
\end{enumerate}
\end{lemma}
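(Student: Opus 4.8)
The plan is to prove the lemma by establishing matching bounds: the Hider strategy guarantees the value is at least $1/4$, and the Searcher strategy guarantees it is at most $1/4$.

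First I would verify the Hider bound. Against the Hider strategy of choosing equiprobably from the four pure strategies in $E(1)$ --- both objects buried at depths $x$ and $1$ in one of the four locations, where here it seems $x$ should be chosen so that the Hider uses total depth $1$, i.e.\ $E(1)$ has the deeper object at depth $1$ --- I would argue that any Searcher, with total digging budget $h < 2$, cannot win against more than one of the four pure strategies. The key observation is that to find \emph{both} objects in a location from $E(1)$, the Searcher must dig all the way to depth $1$ in that location (since the deeper object is at depth $1$), costing him a full unit of energy. Since $h < 2$, he can afford to dig to depth $1$ in at most one location, and adaptivity does not help him here because until he digs to depth $1$ in a location he cannot rule out that it is the one containing both objects. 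Hence he wins with probability at most $1/4$, so the value is at least $1/4$.

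Next I would verify the Searcher bound by checking that the described adaptive strategy wins with probability at least $1/4$ against every pure Hider strategy. Since the Searcher randomizes uniformly over the $4!$ labelings of the locations (equivalently, it suffices by symmetry to check against a fixed Hider placement and show the Searcher wins for at least $1/4$ of the labelings), I would split into the two families: (a) the two objects are in the same location, and (b) the two objects are in different locations. In Stage 1 the Searcher digs to depth $1/2$ in $L_1$, then to depth $1/2$ in $L_2$, then deepens $L_1$ to depth $1$, using total energy $2 \cdot (1/2) + 1/2 = 3/2 \le h$ before Stage 2 even begins; one checks that Stage 1 always locates the first object before the budget $h<2$ is exhausted, and that the remaining budget $h - (\text{Stage 1 cost})$ suffices to complete the relevant $IS$ search of Stage 2. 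For case (a), if the common location is labeled $L_1$ the Searcher reaches depth $1$ there in Stage 1 and finds both objects; if it is labeled $L_2$ he finds the first object at depth $\le 1/2$ in $L_2$ in Stage 1, then $IS(234)$ reaches depth $1$ in $L_2$; so he wins for $2$ of the $4$ possible labels of the common location (probability $1/2 \ge 1/4$). For case (b), one traces through which label-pair of the two occupied locations lets Stage 1 find the shallower object and Stage 2 ($IS(1234)$ or $IS(234)$) reach the deeper one within budget; the bookkeeping is a finite case check and should yield success for at least $1/4$ of the labelings. I would present this case analysis in detail here since, as the authors note, later proofs will only sketch the analogous bookkeeping.

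The main obstacle I expect is the Stage 2 budget accounting in case (b): one must carefully confirm, using $h \ge 11/6$, that after the (variable) Stage 1 cost of finding the first object, enough energy remains to carry out $IS(1234)$ or $IS(234)$ down to the exact depth forced by where the first object was found --- and simultaneously confirm, using $h < 2$, that the Hider's $E(1)$ strategy really cannot be beaten more than a quarter of the time. Getting the endpoints of the interval $[11/6,2)$ to be exactly the range where both bounds are tight is the delicate part; the combinatorics of which labelings succeed is routine once the budget inequalities are pinned down. I would organize the case check in a small table of (Hider type, labeling) pairs to make the counting transparent.
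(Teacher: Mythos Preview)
Your overall structure (matching bounds via the Hider and Searcher strategies) is correct, and your Hider argument is essentially right, though note you have the directions swapped in the opening paragraph: the Hider's strategy gives an \emph{upper} bound on the value (the Searcher's win probability) and the Searcher's strategy gives a \emph{lower} bound.

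On the Searcher side your proposal has genuine gaps. First, the claim that ``Stage 1 always locates the first object before the budget $h<2$ is exhausted'' is false: Stage~1 only digs in $L_1$ and $L_2$, so if neither object lies there (or the only one there is in $L_2$ at depth $>1/2$), Stage~1 finds nothing and the Searcher simply loses. Relatedly, your case~(a) analysis is incorrect: if both objects lie in $L_2$ with the shallower one at depth greater than $1/2$ (for instance both at depth $1$, which is exactly the Hider's optimal $E(1)$ configuration), the Searcher digs $1/2$ in $L_1$, $1/2$ in $L_2$, then to $1$ in $L_1$, finds nothing, and loses. Thus in case~(a) the Searcher is guaranteed to win only when the common location is $L_1$, i.e.\ with probability $1/4$, not $1/2$.

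Second, and more structurally, your case~(b) is not a ``finite case check'' over labelings alone: whether a given labeling succeeds depends on the continuous depth $x\ge y$, and the set of winning labelings changes as $x$ varies. The paper's proof splits case~(b) into three regimes, $x\ge 5/6$, $x\in[1/2,5/6]$, and $x\le 1/2$, and in each regime exhibits a different collection of at least $3$ (in the last regime, $4$) of the $12$ ordered location pairs for which the Searcher wins. The threshold $5/6$ is exactly where $h\ge 11/6$ is needed (via the computation $7/2-2x\le 11/6$), and the threshold $1/2$ reflects the Stage~1 depth in $L_2$. Without this depth-based subdivision your plan is incomplete; the ``routine combinatorics'' you anticipate will not close without it.
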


\begin{proof}
First note that the given Hider strategy clearly guarantees that the expected payoff is no greater than $1/4$ since $h < 2$ so the Searcher can dig to depth $1$ in at most one location.

So we just need to show that the Searcher's strategy guarantees he will win with probability at least $1/4$ against any pure Hider strategy. We will assume in this proof (and in the proofs of the optimality of the Searcher strategies in the other 3 lemmas) that the objects are hidden at depths $x$ and $y$ with $x \ge y$. We split the analysis into $4$ cases: (a) the two objects are in the same locations, (b) the objects are in different locations with $x \ge 5/6$, (c) the objects are in different locations with $x \in [1/2,5/6]$, and (d) the objects are in different locations with $x \le 1/2$.

\begin{description}
\item[Case (a): objects in the same location.]
In this case, it is clear that the Searcher wins if $L_1$ contains the objects, and this happens with probability $1/4$.
\item[Case (b): objects in different locations with $x \ge 5/6$.]
In this case there are $12$ equally probable combinations for the locations of the two objects, with respect to the Searcher's ordering. We show that the Searcher wins for the $3$ of these combinations in which the object at depth $x$ is in $L_1$. 

Indeed, if the other object is in $L_2$ then the Searcher will certainly find both of them, since he will dig to depth $1/2$ in $L_1$, then to depth $y$ in $L_2$, finding one of the objects, then to depth $x$ in $L_1$, finding the other (a total depth of only $x+y \le 1$). 

If the other object is in $L_3$ or $L_4$ then the Searcher will dig to depth $1/2$ in $L_1$ then to depth $1/2$ in $L_2$ before continuing to depth $1$ in $L_1$ and then to depth $1-x$ in $L_3$ and $L_4$, thereby finding both objects after digging to a total depth $d$ given by
\[
d = 1+1/2+2(1-x)=7/2-2x \le 11/6,
\]
with the last inequality following from $x \ge 5/6$.

\item[Case (c): objects in different locations with $x \in \lbrack 1/2, 5/6 \rbrack$.]
Similarly, in this case we show that the Searcher wins in $3$ of the $12$ possible combinations for the locations of the two objects. First suppose the object at depth $y$ is in $L_2$ and the object at depth $x$ is in $L_1$ or $L_3$. In this case, the Searcher digs to depth $1/2$ in $L_1$ then to depth $y$ in $L_2$, finally continuing to depth $1-y$ in $L_1$ and $L_3$, finding both objects after digging to a total depth $d$ given by
\[
d = (1-y)+y+(1-y)=2-y \le 11/6,
\]
since $y \ge 1/6$.

Finally, suppose the object at depth $x$ is in $L_2$ and the other is in $L_1$. Then the Searcher digs to depth $y$ in $L_1$, finds the first object, continues to depth $1$ in $L_1$ and then digs to depth $1-y$ in $L_2$, finding both objects after digging a maximum total depth of $d = 1+(1-y) \le 11/6$.

\item[Case (d): objects in different locations with $x \le 1/2$.]
In this case the Searcher wins in $4$ of the $12$ possible combinations: when the objects are in $L_1$ and $L_2$ or $L_2$ and $L_3$. Indeed, if they are in $L_1$ and $L_2$, the Searcher finds the object in $L_1$ before reaching depth $1/2$. He then continues digging to depth $1$ in $L_1$, and then digs in $L_2$, finding the other object having dug a total depth of no more than $3/2$.

If the objects are in $L_2$ and $L_3$ then the Searcher digs to depth $1/2$ in $L_1$, before digging in $L_2$ until finding an object at some depth $x \le 1/2$. He then digs to depth $1-x$ in $L_1$ and then digs in $L_3$ until finding the other object at some depth $y \le 1/2$, making a total depth $d$ satisfying
\[
d \le (1-x) + x + y \le 3/2 \le 11/6.
\]
\end{description}
\end{proof} 
Next we give the solution of the game in the case $h \in [11/5,7/3)$.

\begin{lemma}
\label{lemma:11/5-7/3}
For $n=4,k=2$ and $h \in [11/5,7/3)$, the value of the game is $9/20$. The Hider's optimal strategy is choose equiprobably between the $20$ pure strategies in $D(1/3) \cup E(1/3) \cup E(2/3)$.

The Searcher's optimal strategy is to number the locations randomly and proceed as follows.
\begin{enumerate}
\item[Stage 1:] Dig according to the sequence
\[
(3/5,0,0,0),(3/5,2/5,0,0),(4/5,3/5,0,0),(4/5,4/5,0,0),(1,4/5,0,0),(1,1,0,0).
\]
\item[Stage 2:] If an object is found in $L_1$, follow $IS(1234)$; if an object is found in $L_2$, with probability $4/5$ follow $IS(1234)$ and with probability $1/5$ follow $IS(134)$.
\end{enumerate}

\end{lemma}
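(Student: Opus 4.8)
The plan is to prove two matching bounds, exactly as in the proof of Lemma~\ref{lemma:11/6-2}: that the stated Hider strategy keeps the expected payoff at most $9/20$ against every Searcher strategy, and that the stated Searcher strategy guarantees expected payoff at least $9/20$ against every pure Hider strategy. Since the two bounds coincide, the value equals $9/20$.

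For the upper bound, because the payoff is linear in the Searcher's randomisation it suffices to show that no \emph{deterministic} Searcher with total digging budget $h<7/3$ defeats more than $9$ of the $20$ equiprobable pure Hider strategies in $D(1/3)\cup E(1/3)\cup E(2/3)$. All object depths occurring here lie in $\{1/3,2/3,1\}$, so a best response may be assumed to dig only to those depths in each location, and its play is a finite decision tree whose branching happens when an object is found. The first object found is always at depth $1/3$ or $2/3$, in some location $L_a$, and this alone restricts the realised Hider strategy to at most four candidates: either $E(1/3)\text{-}a$ together with the three $D(1/3)$ strategies whose shallow object sits in $L_a$, or $E(2/3)\text{-}a$ together with the three $D(1/3)$ strategies whose deep object sits in $L_a$. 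A budget count then limits how many of these four can be completed from that point --- roughly, completing $E(\cdot)\text{-}a$ costs digging $L_a$ down to $1$, while completing each relevant $D(1/3)$ strategy costs reaching depth $2/3$ (resp.\ $1/3$) in a further location --- and summing the per-branch bounds, while using the budget to control how many branches can be reached with enough slack to spare, gives the bound $9$. I expect this to be the delicate step, because the twelve $D(1/3)$ strategies are distributed among the branches according to the order in which the Searcher's trajectory reaches the relevant depths rather than in a fixed way, so the accounting has to be organised around the Searcher's choices (and one must remember that budget spent before the first discovery is shared by all branches).

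For the lower bound I would fix an arbitrary pure Hider strategy and compute the probability that the Searcher's randomised strategy --- uniform random labelling of the four locations, together with the $4/5$--$1/5$ coin toss in Stage~2 --- discovers both objects, showing it is always at least $9/20$. Following Lemma~\ref{lemma:11/6-2} I would split into the case that both objects lie in one location and the case that they lie in different locations at depths $x\ge y$ with $x+y\le 1$, the latter further subdivided according to which interval determined by the breakpoints $2/5,3/5,4/5,1$ (the depths appearing in the Stage~1 sequence) contains $x$. In each case the task is to say, for each of the $24$ labellings and for $IS(1234)$ versus $IS(134)$ in Stage~2, whether the search finds both objects; here one repeatedly uses that when the first object is reached in $L_1$ or $L_2$ during Stage~1 the ensuing intelligent search fits inside the budget, via explicit arithmetic such as $1+3(1-x)\le h$ or $2-y\le h$ for $h\ge 11/5$, and one then counts the favourable labellings, weighting by the Stage~2 probabilities, and checks the total is at least $9/20$. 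The main obstacles here are the sheer length and error-proneness of the casework, the fact that in the ``different locations'' case the set of winning labellings genuinely changes with $x$, and the bookkeeping around the $1/5$ probability on $IS(134)$, whose role is precisely to recover those configurations --- objects split between $L_2$ and some $L_j$ with $j\ge 3$ --- that $IS(1234)$ would fail to complete in time; getting the weighted counts to come out to exactly $9/20$ is the part that requires the most care.
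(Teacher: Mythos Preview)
Your proposal is correct and follows essentially the same two-bound structure as the paper: reduce the Hider's strategies to a finite $1/3$-grid and show no deterministic Searcher can beat more than $9$ of the $20$ pure strategies, then verify by case analysis on the depth $x$ of the deeper object that the stated Searcher strategy wins with probability at least $9/20$. The only organisational difference is that the paper carries out the upper-bound count by enumerating the three possible Stage~1 trajectories up to time $1$ (namely $(2/3,1/3,0,0)$, $(1/3,1/3,0,0)\to(2/3,1/3,0,0)$, and $(1/3,1/3,1/3,0)$ up to symmetry) and tallying wins in each, whereas you organise around the decision-tree branches at the first discovery; both lead to the same bound of $9$.
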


\begin{proof}
We first show that the Hider's strategy ensures an expected payoff of no more than $9/20$ against any pure search strategy of the Searcher. We will show that whatever strategy the Searcher chooses, he can check at most $9$ of the Hider's $20$ possible configurations for the $2$ objects. First note that if we fix the Hider's strategy, we only need consider a finite number of Searcher strategies, where in every time interval $[t/3,(t+1)/3)$ for $t=0,1,2,\ldots$ he digs in the same location. Hence we only need to solve a finite optimization problem. Also note that after Stage 1 of the Searcher's strategy has been employed and he has found an object, it is clear that he should proceed to look for the remaining object using an intelligent search.

We analyze the problem by splitting it into $3$ cases, depending on Stage 1 of the Searcher's strategy up to time $1$. By symmetry we can assume that Searcher begins by digging in $L_1$, followed by the other $3$ locations in increasing order.

\begin{description}
\item[Case 1: Stage 1 of the Searcher's strategy begins with $(2/3,0,0,0),(2/3,1/3,0,0)$.]
In this case, there are two possibilities: either the Searcher has or has not found an object by time $1$. If he has not, then he can either continue digging in $L_2$ to depth $2/3$, in which case he wins against precisely $2$ of the Hider's pure strategies ($(0,2/3,1/3,0)$ and $(0,2/3,0,1/3)$); or he can dig to depth $1/3$ in $L_3$ or $L_4$, in which case he will only win against $1$ of the Hider's pure strategies.

If the Searcher has found an object by time $1$, it must be at time $1/3$, $2/3$ or $1$. If it is at time $1$ then the Searcher can win against $1$ further Hider pure strategy, say $(0,\{1/3,1\},0,0)$. If it is at time $2/3$ then the Searcher can win against each of the $4$ Hider pure strategies for which there is an object at depth $2/3$ in $L_1$. If it is at time $1/3$ then the Searcher can win against $2$ further Hider pure strategies, say $(\{1/3,1\},0,0,0)$ and $(1/3,2/3,0,0)$.

In total this means the Searcher wins against a maximum of $2+1+4+2=9$ of the $20$ pure strategies of the Hider.

\item[Case 2: Stage 1 of the Searcher's strategy begins with $(1/3,0,0,0),(1/3,1/3,0,0),(2/3,1/3,0,0)$.]
This is similar to the previous case, in that if the Searcher has not found an object by time $1$ he can win against a maximum of $2$ of the Hider's pure strategies. Also, if he finds an object at time $1/3$ then he wins against $2$ further pure strategies of the Hider. If he finds an object at time $1$ however, he can win against the $3$ Hider pure strategies of $(\{2/3,1\},0,0,0)$, $(2/3,0,1/3,0)$ and $(2/3,0,0,1/3)$; and if he finds an object at time $2/3$ he can win against only $2$ pure strategies of the Hider, say $(2/3,1/3,0,0)$ and $(0,\{1/3,1\},0,0)$. This also sums to $9$ pure Hider strategies in total.

\item[Case 3: Stage 1 of the Searcher's strategy begins with $(1/3,0,0,0),(1/3,1/3,0,0),(1/3,1/3,1/3,0)$.]
In this case, it is easy to check that if the Searcher has not found anything by time $1$, he can win against a maximum of $3$ of the Hider's pure strategies. If he finds an object at time $1/3$, $2/3$ or $1$ then he can win against at most $2$ of the Hider's pure strategies. So in total the Searcher wins against $3+2+2+2=9$ of the Hider's pure strategies.
\end{description} 

We must also show that the Searcher's strategy guarantees that he will win with probability at least $9/20$ against any pure Hider strategy where the objects are hidden at depths $x$ and $y$ with $x \ge y$. We sketch the proof and leave it to the reader to check the details.

First suppose the objects are buried in the same location. In this case, it is easy to check that the Searcher wins with probability $1$ if they are both in $L_1$ and he wins with probability $4/5$ if they are in $L_2$, so that the overall probability of a win is $(1+4/5)/4=9/20$.

Now suppose the objects are in different locations with $x,y \in [2/5,3/5]$. In this case we claim that the Searcher wins with probability at least $1/2$: in particular, he wins if the objects are in $L_1$ and $L_2$, $L_1$ and $L_3$, or $L_2$ and $L_3$. This is easily verified. 

If the objects are in different locations with $x \le 1/5$, then the Searcher wins against the same combinations of the locations as in the previous case.

Finally, suppose the objects are in different locations with $x \ge 1/5$ and $y \le 2/5$. We claim that the Searcher wins if the deepest object is in $L_1$ or if it is in $L_2$ and the other object is in $L_1$ or $L_3$. He also wins with probability $1/5$ if either the deeper object is in $L_2$ and the other is in $L_4$ or the deeper object is in $L_3$ and the other is in $L_2$. To check these claims it is best to split the analysis into the $4$ cases, $x \in [1/5,2/5]$, $x \in [2/5,3/5]$, $x \in [3/5,4/5]$ and $x \in [4/5,1]$. So the total probability with which he wins is $(5+2/5)/12 =9/20$.
\end{proof}

We note that for the case described in Lemma \ref{lemma:11/5-7/3}, the optimal Searcher strategy involves digging in two locations at the same time. None of the other Searcher strategies we present here have this property, and indeed it rarely occurs in optimal solutions to previous cases of the game.

We present the solution to the game for $h \in [7/4,9/5)$ in Lemma \ref{lemma:7/4-9/5} and for $h \in [9/5,11/6)$ in Lemma \ref{lemma:9/5-11/6} with sketch proofs.

\begin{lemma}
\label{lemma:7/4-9/5}
For $n=4,k=2$ and $h \in [7/4,9/5)$, the value of the game is $9/40$. The Hider's optimal strategy is to choose equiprobably between the $40$ pure strategies in $D(1/5) \cup D(2/5) \cup E(1/5) \cup E(2/5) \cup E(3/5) \cup E(4/5)$.

The Searcher's optimal strategy is to number the locations randomly and with probability $3/4$ proceed as follows.
\begin{enumerate}
\item[Stage 1:] Dig according to the sequence $(3/4,0,0,0),(3/4,1/4,0,0),(1,1/4,0,0),(1,3/4,0,0)$.
\item[Stage 2:] If an object is found in $L_1$, follow $IS(1234)$; if an object is found in $L_2$, follow $IS(134)$.
\end{enumerate}
With probability $1/4$, proceed as follows.
\begin{enumerate}
\item[Stage 1:] Dig according to the sequence $(3/4,0,0,0),(3/4,3/4,0,0),(1,3/4,0,0)$.
\item[Stage 2:] If an object is found in $L_1$, then with probability $3/5$ follow $IS(1234)$ and with probability $2/5$ follow $IS(234)$; if an object is found in $L_2$, follow $IS(134)$.
\end{enumerate}

\end{lemma}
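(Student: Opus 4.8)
The plan is to mirror the two-sided argument used in Lemmas \ref{lemma:11/6-2} and \ref{lemma:11/5-7/3}. For the upper bound, I would argue that the given Hider mixture over the $40$ pure strategies in $D(1/5) \cup D(2/5) \cup E(1/5) \cup E(2/5) \cup E(3/5) \cup E(4/5)$ keeps the Searcher's expected payoff at most $9/40$. As in the earlier lemmas, once the Hider's strategy is fixed the Searcher has only finitely many ``relevant'' responses — in every time interval $[t/5,(t+1)/5)$ he may as well dig in a single location (since all depths in play are multiples of $1/5$), and after finding the first object an intelligent search is optimal. So the upper bound reduces to a finite combinatorial optimization: show that no Searcher strategy checks more than $9$ of the $40$ configurations. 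I would organize the casework by what Stage~1 looks like up to time $1$ (the Searcher's digging reaches total depth $1$), which by the symmetry among locations can be normalized, and then by whether and when the first object is found. For each such case one counts the maximum number of Hider pure strategies compatible with a win; the bound $9$ should emerge in each branch, exactly as $2+1+4+2$, $2+2+3+2$ and $3+2+2+2$ did before.

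For the lower bound I would verify that the randomized Searcher strategy (the $3/4$–$1/4$ mixture of the two Stage~1/Stage~2 plans described) wins with probability at least $9/40$ against every pure Hider strategy. Following the template of Lemma \ref{lemma:11/5-7/3}, I would split into: (i) objects in the same location, checking the win probabilities against each of $E(1/5),E(2/5),E(3/5),E(4/5)$ — the two Searcher branches and their Stage~2 randomizations are tuned so that these average to $9/40$; and (ii) objects in different locations at depths $x \ge y$, further subdivided according to which of the intervals $[0,1/5],[1/5,2/5],[2/5,3/5],[3/5,4/5],[4/5,1]$ the depths $x$ and $y$ lie in. In each subcase I would identify exactly which location-pairs (relative to the Searcher's random numbering) lead to a certain win and which lead to a partial (probability $2/5$ or $3/5$ or $1/5$) win, and check that the resulting count, divided by $12$, is at least $9/40$. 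The role of the $1/4$-probability branch, which digs $(3/4,3/4,0,0)$ rather than $(3/4,1/4,0,0)$, is precisely to pick up configurations the main branch misses; I would make that trade-off explicit.

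The main obstacle is the sheer volume of the casework on the Searcher side of the lower bound: with six ``depth classes'' for $x$ and $y$, two Searcher branches, and the Stage~2 coin flips, there are many subcases, and the delicate point is the bookkeeping that shows the weighted count of winning configurations comes out to exactly $9/40$ and not less — the strategy is evidently calibrated so that several subcases are tight. In keeping with the paper's stated style (``we give a detailed exposition the first time and simply sketch the details in later proofs''), I would not grind through every subcase but would present the case partition, do one or two representative depth-class computations in full (e.g. the $E$-cases and the $x \in [1/5,2/5]$, $y \le 1/5$ case), record the winning configuration counts for the remaining classes in a compact list, and leave the routine verifications to the reader, exactly as the authors do after Lemma \ref{lemma:11/5-7/3}.
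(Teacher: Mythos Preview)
Your plan is essentially the paper's own: a two-sided argument with finite casework on each side, presented in sketch form. Two points of execution differ from what the paper actually does and are worth adjusting. For the upper bound, the paper partitions Searcher behaviors by the Stage~1 profile at time $4/5$ (not time $1$); this gives exactly the five partitions of $4$ into at most four parts, and a short table of $(N_1,N_2)$ values summing to at most $9$ in each row. For the lower bound, the natural thresholds come from the \emph{Searcher's} digging depths, not the Hider's: the paper splits only into (i) same location, (ii) different locations with $x\ge 3/4$, and (iii) different locations with $x,y\le 3/4$, each dispatched by a one-line table over the $12$ orderings. Your $1/5$-grid would need refinement at $3/4$ and generates more subcases than necessary. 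Finally, be careful with your phrasing in case~(i): for the lower bound you must show every pure Hider strategy is beaten with probability at least $9/40$, not that the $E$-types ``average'' to $9/40$; in fact the same-location win probability here is exactly $9/40$ regardless of the depths, so the distinction is harmless but should be stated correctly.
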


\begin{proof}
The proof of the optimality of the Hider strategy is similar to the proof of the upper bound in Lemma \ref{lemma:11/5-7/3}, and we give an outline, leaving it to the reader to check the details.

We need to show that the Searcher can win against a maximum of $7$ of the Hider's pure strategies. We break the analysis down into $5$ cases, depending on the depth the Searcher has dug in each location after time $4/5$ in Stage 1 of his strategy. For each of these cases there are two sub-problems that must be solved: how many Hider pure strategies can the Searcher win against if he has not found an object by time $4/5$, and how many Hider pure strategies can the Searcher win against if he has found an object by time $4/5$? Denote these numbers by $N_1$ and $N_2$ and we give their values in each of the $5$ cases in Table \ref{tab:Lemma4-hider}, showing that they sum to no more than $9$.

\begin{table}[ht]
	\centering
\begin{tabular}{|c|c|c|c|}
	\hline
	Depths dug at time $4/5$ & &  & Max. number of \\ 
	in Stage 1 & $N_1$ & $ N_2$ & pure strategies beaten \\\hline
	$(4/5,0,0,0)$ & $0$ & $9$ & $9$ \\ 
	$(3/5,1/5,0,0)$ & $3$ & $6$ & $9$ \\ 
	$(2/5,2/5,0,0)$ & $3$ & $6$ & $9$\\
	$(2/5,1/5,1/5,0)$ & $4$ & $5$ & $9$\\
	$(1/5,1/5,1/5,1/5)$ & $4$ & $4$ & $8$\\
	\hline
\end{tabular}
	\caption{Performance of the optimal Hider strategy for $h \in [7/4,9/5)$.}
		\label{tab:Lemma4-hider}
\end{table}

To show that the Searcher strategy is optimal amounts to checking that it wins with probability at least $9/40$ against three classes of Hider strategy. (As usual we assume the objects are at depths $x$ and $y$ with $x \ge y$.) The first class is when both the objects are in the same location. In this case, the Searcher will win with probability $9/40$ if they are in $L_1$.

Next, if the objects are in different locations with $x \ge 3/4$, then the Searcher wins according to the probabilities given in Table \ref{tab:Lemma4-searcher1}, where the Hider strategies are written with respect to the Searcher's ordering of the locations.

\begin{table}[ht]
	\centering
\begin{tabular}{|c|c|c|c|c|}
	\hline
	Hider strategy & $(x,y,0,0)$ & $(y,x,0,0)$ & $(x,0,y,0)$ & $(x,0,0,y)$ \\ 
	\hline
	Minimum probability Searcher wins & $1$ & $ 1/4 \cdot 2/5 $ & $3/4+1/4 \cdot 2/5$ & $3/4$ \\
	\hline
\end{tabular}
	\caption{Performance of the optimal Searcher strategy for $h \in [7/4,9/5)$ when objects are in different locations at $x \ge 3/4$ and $y \le x$.}
		\label{tab:Lemma4-searcher1}
\end{table}

Since there are $12$ possible orderings, the total probability that the Searcher wins is $(1 + 1/4 \cdot 2/5 + 3/4+1/4 \cdot 2/5 + 3/4)/12 = 9/40$.

Finally, if the objects are at depths $x,y \le 3/4$, the Searcher wins according to the probabilities given in Table \ref{tab:Lemma4-searcher2}.

\begin{table}[ht]
	\centering
	\begin{tabular}{|c|c|c|c|c|}
	\hline
	Hider & $(x,y,0,0)$ or & $(x,0,y,0)$ or & $(0,x,y,0)$ or  \\
	strategy & $(y,x,0,0)$ & $(y,0,x,0)$ & $(0,y,x,0)$ \\
	\hline 
	Minimum probability Searcher wins & $1$  &  $1/10$ & $1/4$ \\
	\hline
	\end{tabular}
	\caption{Performance of the optimal Searcher strategy for $h \in [7/4,9/5)$ when objects are in different locations at $x,y \le 3/4$.}
		\label{tab:Lemma4-searcher2}
\end{table}

Hence the total probability of a Searcher win is $2(1+  1/10  + 1/4)/12 = 9/40$.

\end{proof}

\begin{lemma}
\label{lemma:9/5-11/6}
For $n=4,k=2$ and $h \in [9/5,11/6)$, the value of the game is $7/30$. The Hider's optimal strategy is to choose equiprobably between the $30$ pure strategies in $D(1/6) \cup D(1/2) \cup E(1/6) \cup E(1/2) \cup E(5/6)$.

The Searcher's optimal strategy is to number the locations randomly and with probability $2/3$ proceed as follows.
\begin{enumerate}
\item[Stage 1:] Dig according to the sequence $(1,0,0,0),(1,4/5,0,0)$.
\item[Stage 2:] If an object is found in $L_1$, follow $IS(1234)$; if an object is found in $L_2$, follow $IS(34)$.
\end{enumerate}
With probability $1/3$, proceed as follows.
\begin{enumerate}
\item[Stage 1:] Dig according to the sequence $(3/5,0,0,0),(3/5,3/5,0,0),(1,3/5,0,0),(1,4/5,0,0)$.
\item[Stage 2:] If an object is found in $L_1$, then with probability $4/5$ follow $IS(1234)$ and with probability $1/5$ follow $IS(234)$; if an object is found in $L_2$, follow $IS(134)$.
\end{enumerate}

\end{lemma}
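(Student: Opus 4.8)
\emph{Plan.} I would prove this exactly as Lemmas~\ref{lemma:11/5-7/3} and~\ref{lemma:7/4-9/5} are proved: in two halves, first checking that the stated Hider strategy caps the Searcher's expected payoff at $7/30$, and then that the stated randomized Searcher strategy wins with probability at least $7/30$ against an arbitrary pure Hider strategy in which the two objects sit at depths $x\ge y$. Note first that $\left\vert D(1/6)\right\vert + \left\vert D(1/2)\right\vert + \left\vert E(1/6)\right\vert + \left\vert E(1/2)\right\vert + \left\vert E(5/6)\right\vert = 12+6+4+4+4 = 30$, consistently with the value $7/30$.

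\emph{Upper bound.} Since the Hider mixes uniformly over these $30$ configurations, it suffices to show that no pure Searcher strategy beats more than $7$ of them. As before this reduces to a finite optimization: the relevant depths all lie on the grid $\{t/6\}$, so one may restrict to Searcher strategies digging in a single location on each interval $[t/6,(t+1)/6)$; after the first find an intelligent search is optimal; and by symmetry the Searcher may be assumed to open in $L_1$ and then work through $L_2,L_3,L_4$ in increasing order. One then case-splits on the Searcher's Stage~1 profile up to a suitable checkpoint time (time $1$ in the analysis of Lemma~\ref{lemma:11/5-7/3}, time $4/5$ in that of Lemma~\ref{lemma:7/4-9/5}), and for each profile records $N_1$, the largest number of configurations beaten when nothing is found by the checkpoint, and $N_2$, the largest number beaten when an object is found by it, displaying in a table of the shape of Table~\ref{tab:Lemma4-hider} that $N_1+N_2\le 7$ throughout. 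The delicate point in the count is the behaviour against the $E(\cdot)$ configurations: on finding the shallow object of such a pair, the Searcher must spend further budget digging down to depth $1$ in that same location to collect the decoy, which shortens his remaining reach; this is, I expect, what pushes the count below the value one would naively anticipate from the earlier lemmas.

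\emph{Lower bound.} Here I would analyse the randomized Searcher strategy --- a uniformly random labelling of the four locations together with Plan~A (probability $2/3$, Stage~1 profile $(1,0,0,0),(1,4/5,0,0)$, then $IS(1234)$ or $IS(34)$) or Plan~B (probability $1/3$, Stage~1 profile $(3/5,0,0,0),(3/5,3/5,0,0),(1,3/5,0,0),(1,4/5,0,0)$, then a randomized $IS(1234)/IS(234)$, or $IS(134)$) --- against a pure Hider strategy with objects at depths $x\ge y$. The analysis splits first on whether the two objects share a location: if they do, both plans reach depth $1$ in the deepest-dug location, so the Searcher wins precisely when the objects lie in that location, which happens with probability $1/4\ge 7/30$. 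If the objects are in different locations, one splits further on the positions of $x$ and $y$ relative to the thresholds $1/6,\,3/5,\,4/5,\,5/6,\,1$ that govern the reach of $IS(1234)$, $IS(234)$, $IS(134)$, $IS(34)$; for each band one computes the win probability averaged over the $12$ equally likely labellings and over the plan (and the internal Stage~2 coin flips), and checks it is at least $7/30$. As in Lemma~\ref{lemma:7/4-9/5} this is cleanest as a pair of small tables, one for $x$ above the largest relevant threshold and one for $x$ below it, whose rows give, for each labelling pattern, the minimum win probability, each justified by a one-line ``dig, find at depth $d$, continue to depth $1-d$ elsewhere, find'' argument.

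The step I expect to be the main obstacle is the different-locations part of the lower bound. Each contribution to the win probability switches on or off as $x$ or $y$ crosses one of $1/6,3/5,4/5,5/6$, and the two-plan mixture (with the extra coin flip inside Plan~B) multiplies the number of sub-cases one must keep straight. It is nevertheless a finite verification of exactly the type carried out for the earlier cases, and the numbers are arranged so that $7/30$ is attained with equality only on the bands meeting the Hider's support $D(1/6)\cup D(1/2)\cup E(1/6)\cup E(1/2)\cup E(5/6)$ --- the consistency condition that pins the value.
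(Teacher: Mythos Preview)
Your plan matches the paper's proof almost exactly: the paper also handles the upper bound by case-splitting on the Searcher's Stage~1 profile at a checkpoint (time $5/6$, yielding four profiles $(5/6,0,0,0)$, $(1/2,1/3,0,0)$, $(1/2,1/6,1/6,0)$, $(2/6,1/6,1/6,1/6)$, each with $N_1+N_2\le 7$), and handles the lower bound by splitting into same-location and different-location cases, the latter by the band of $x$ (the paper needs only three bands: $x\ge 4/5$, $x\in[3/5,4/5]$, and $x,y\le 3/5$).

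There is, however, a slip in your same-location analysis. You assert that both plans reach depth $1$ in $L_1$, so the Searcher wins whenever both objects lie there, giving win probability $1/4$. But in Plan~B, Stage~1 encounters the shallow object in $L_1$ \emph{before} reaching depth $1$, and Stage~2 then takes over: with probability $1/5$ it follows $IS(234)$, abandoning $L_1$ and missing the deep object at depth $1$. Hence the conditional win probability when both objects are in $L_1$ is $\tfrac{2}{3}\cdot 1+\tfrac{1}{3}\cdot\tfrac{4}{5}=\tfrac{14}{15}$, not $1$; and when both are in $L_2$, $L_3$, or $L_4$ it is $0$ (in $L_2$ the Stage~2 rules are $IS(34)$ and $IS(134)$, neither of which returns to $L_2$). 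The overall probability is therefore $\tfrac{1}{4}\cdot\tfrac{14}{15}=\tfrac{7}{30}$, so the bound still holds --- with equality, not the slack your argument suggests --- but the reasoning as written is incorrect and obscures the fact that this case is tight.
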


\begin{proof}
We first show that the Searcher can win against a maximum of $9$ of the Hider's $30$ pure strategies. Similarly to the proof of Lemma \ref{lemma:7/4-9/5}, we break the analysis down into $4$ cases, depending on the depth to which the Searcher has dug in each location after time $5/6$ in Stage 1 of his strategy. The analysis is summed up in the table below, using the notation $N_1$ and $N_2$ as in the proof of Lemma \ref{lemma:7/4-9/5}.

\begin{table}[ht]
	\centering
	\begin{tabular}{|c|c|c|c|}
	\hline
	Depths dug at time $5/6$ & &  & Max. number of \\ 
	in Stage 1 & $N_1$ & $ N_2$ & pure strategies beaten \\\hline
	$(5/6,0,0,0)$ & $0$ & $7$ & $7$ \\ 
	$(1/2,1/3,0,0)$ & $3$ & $4$ & $7$ \\ 
	$(1/2,1/6,1/6,0)$ & $2$ & $5$ & $7$\\
	$(2/6,1/6,1/6,1/6)$ & $3$ & $4$ & $7$\\
	\hline
	\end{tabular}
	\caption{Performance of the optimal Hider strategy for $h \in [9/5,11/6)$.}
		\label{tab:Lemma5-hider}
\end{table}

To show that the Searcher strategy is optimal is also similar to the proof of Lemma \ref{lemma:7/4-9/5}. We check that the strategy wins with probability at least $7/30$ against $4$ classes of Hider strategy. As before, the first is when both the objects are in the same location and the Searcher wins with probability $7/30$ if they are in $L_1$.

The second is if the objects are in different locations with $x \ge 4/5$, then the Searcher wins according to the probabilities in Table \ref{tab:Lemma5-searcher1}.

\begin{table}[ht]
	\centering
	\begin{tabular}{|c|c|c|c|c|}
	\hline
	Hider strategy & $(x,y,0,0)$ & $(y,x,0,0)$ & $(x,0,y,0)$ & $(x,0,0,y)$ \\ 
	\hline
	Minimum probability Searcher wins & $1$ & $ 1/3 \cdot 1/5 $ & $1$ & $2/3+1/3 \cdot 1/5$ \\
	\hline
	\end{tabular}
	\caption{Performance of the optimal Searcher strategy for $h \in [9/5,11/6)$ when objects are in different locations at $x \ge 3/4, y \le x$.}
		\label{tab:Lemma5-searcher1}
\end{table}

So the total probability that the Searcher wins is  $(1 + 1/3 \cdot 1/5 + 1 + 2/3+1/3 \cdot 1/5)/12 = 7/30$.

Next, if the objects are in different locations with $x \in [3/5,4/5]$, the Searcher wins according to the probabilities in Table \ref{tab:Lemma5-searcher2}.

\begin{table}[ht]
	\centering
	\begin{tabular}{|c|c|c|c|c|c|}
	\hline
	Hider strategy & $(x,y,0,0)$ & $(y,x,0,0)$ & $(x,0,y,0)$ & $(y,0,x,0)$  & $(0,y,x,0)$\\
	\hline 
	Minimum probability Searcher wins & $1$ & $ 1 $ & $11/15$ & $1/15$ &  $1/3$ \\
	\hline
	\end{tabular}
	\caption{Performance of the optimal Searcher strategy for $h \in [9/5,11/6)$ when objects are in different locations at $x \in [3/5,4/5], y \le x$.}
		\label{tab:Lemma5-searcher2}
\end{table}

In this case, the probability of a Searcher win is strictly greater than $7/30$.

Lastly, suppose the objects' depths satisfy $x,y \le 3/5$. In this case, the Searcher wins according to the probabilities in Table \ref{tab:Lemma5-searcher3}.

\begin{table}[ht]
	\centering
	\begin{tabular}{|c|c|c|c|c|}
\hline
Hider & $(x,y,0,0)$ or& $(x,0,y,0)$ or & $(0,x,y,0)$ or\\
strategy & $(y,x,0,0)$ & $(y,0,x,0)$  & $(0,y,x,0)$\\
\hline 
Minimum probability Searcher wins & $1$ &  $1/15$ &  $1/3$ \\
\hline
	\end{tabular}
	\caption{Performance of the optimal Searcher strategy for $h \in [9/5,11/6)$ when objects are in different locations at $x,y \le 3/5$.}
		\label{tab:Lemma5-searcher3}
\end{table}

So the total probability the Searcher wins is $2(1+1/15+1/3)/12= 7/30$.

\end{proof}

\section{More general results}
\label{sec:general}

In this section we give some more general results for the game in the case of arbitrary $n$. The first result says that for $k=2$, if $h \le n/2$, the value of the game is asymptotically equal to $h/n$ as $n \rightarrow \infty$. A similar result is presented in Section 3.6.6 of \cite{OpDenKelder}, though our theorem uses a different Searcher strategy and our proof approach is different, being rather more geometrical in flavor. We believe that this could be a useful approach in future work on analyzing the game asymptotically.  

\begin{thm}
Suppose $k=2$ and $h \ge n/2$. Then the value $V(n,h)$ of the game is 
\[
V(n,h) = h/n + \epsilon(n),
\]
where $\epsilon(n) \rightarrow 0 $ as $n \rightarrow \infty$.
\end{thm}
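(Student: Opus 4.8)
The plan is to prove matching upper and lower bounds on $V(n,h)$, both of the form $h/n + o(1)$. The upper bound (the Hider's side) is the easier direction and should be essentially exact: I would have the Hider pick two locations uniformly at random among the $\binom{n}{2}$ pairs and bury one object at depth $1/2$ in each. Since the Searcher has total budget $h < n$, a counting/averaging argument shows that the expected number of locations in which he can dig to depth $1/2$ is at most $2h$; to catch \emph{both} of the Hider's objects he must dig to depth $\ge 1/2$ in both chosen locations, and the probability that a uniformly random pair is contained in a given set of $\le 2h$ ``deep'' locations is at most $\binom{2h}{2}/\binom{n}{2} \approx (2h/n)^2$. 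That only gives $O(h^2/n^2)$, which is too strong to be the truth — so instead I expect the right Hider mixture is over configurations of the form ``one object at depth $1$ in one location, one object at depth $0$ (a decoy) nowhere, or at shallow depth'', mimicking the $E(1)$-type strategies seen in the $n=4$ lemmas: bury a single real object at depth roughly $1$ in a uniformly random location (using almost all the digging budget there) together with a cheap decoy. Against this, the Searcher wins only if he digs to depth $1$ in the right location, and with budget $h$ he can reach depth $1$ in at most $h$ locations, so the Hider holds the value to $\le h/n$. I would also need the decoy to force the Searcher to genuinely commit, so that adaptivity gives him no extra leverage; this is where the analysis of Stage 2 responses needs a short argument.

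For the lower bound (the Searcher's side) I would use the geometric strategy the introduction alludes to. The idea: think of the Searcher's allocation as a point in the simplex $\{(x_1,\dots,x_n): \sum x_i \le h,\ 0 \le x_i \le 1\}$, and design a randomized Stage-1 allocation together with an intelligent Stage-2 continuation $IS(\cdot)$. Concretely, the Searcher picks a uniformly random subset of about $h$ locations and digs to depth $\approx 1$ in each (Stage 1), then on finding the first object at depth $x$ in some location, the remaining budget $\approx h - (\text{used})$ is spent doing an intelligent search for the second object at its forced complementary depths. The key estimate is: for \emph{any} pure Hider configuration $(S_1,\dots,S_n)$, the probability over the Searcher's randomization that he catches both objects is at least $h/n - o(1)$. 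The two objects sit in locations $i$ and $j$ (possibly $i=j$) at depths summing to $\le 1$; I would split into the case $i = j$ (both objects in one location, needs depth $\le 1$ there — caught whenever that location is among the chosen $\sim h$, probability $\sim h/n$) and $i \neq j$, where one object is at depth $\le 1/2$, so the Searcher catches both if he digs deep in the location of the deeper object \emph{and} has enough residual budget to reach the shallow one — a geometric/budget-accounting argument shows the residual budget suffices with the stated probability. Averaging over the Hider's mixed strategy then gives $V(n,h) \ge h/n - o(1)$.

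The main obstacle, I expect, is the Searcher's budget bookkeeping in Stage 2 when $i \neq j$: after committing depth close to $1$ in $\Theta(h)$ locations in Stage 1, the Searcher has only $O(1)$ — not $\Theta(n)$ — budget left, so he cannot afford to probe many candidate locations for the second object, and a naive intelligent search over all of them fails. The fix is to exploit that the second object's depth is at most $1 - x$ where $x$ is the (already discovered, hence large on the good event) depth of the first object, so the marginal cost per location in Stage 2 is small; quantifying this tradeoff — how much Stage-1 budget to ``hold back'' versus how aggressively to dig — is exactly the optimization that produces the constant in front, and getting it to match $h/n$ rather than something smaller is the delicate part. A secondary technical point is handling the Hider configurations that (per the remark in the excerpt, following \cite{Csoka}) bury to total depth strictly less than $1$; the argument must not assume the depths sum to exactly $1$, but since shallower burial only helps the Searcher, this should only require care in the bookkeeping, not a new idea. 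Finally, I would collect the $o(1)$ error terms — they come from replacing $\lfloor h \rfloor$, $\binom{n}{2}$, and $n-1$ by their leading-order values — into a single $\epsilon(n) \to 0$.
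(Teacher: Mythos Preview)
Your upper bound is essentially the paper's: the Hider places both objects at depth $1$ in a uniformly random location, and since the Searcher can reach depth $1$ in at most $\lfloor h\rfloor$ locations, $V\le \lfloor h\rfloor/n$. (Your detour through the ``two objects at depth $1/2$'' strategy is a dead end for a different reason than you state: since $h\ge n/2$, the Searcher can simply dig to depth $1/2$ in \emph{all} $n$ locations and win with probability $1$, so that Hider strategy gives the trivial bound $V\le 1$, not $(2h/n)^2$. The decoy worries are also unnecessary --- two objects at depth $1$ in one location already does the job.)

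The real gap is in your lower bound. You have the right Searcher strategy --- dig to depth $1$ in the locations in a random order, and after finding the first object at depth $y$, dig to depth $1-y$ in the remaining locations in random order --- but you misdiagnose the analysis. There is no ``hold back budget'' tradeoff to optimize: the paper digs to full depth $1$ in Stage~1, period. The point you are missing is that the position of the first find in the random ordering is itself random, so the leftover budget is not ``$O(1)$ in the worst case'' but a random quantity to be averaged over. Concretely, if the object at depth $y$ sits at position $i$ and the object at depth $1-y$ at position $j$ in the Searcher's ordering, a short calculation shows the Searcher wins whenever
\[
iy + j(1-y)\;\le\; h.
\]
Thus the win probability is (up to $O(1/n)$) the fraction of the $n\times n$ grid lying in the polygon $P=\{(i,j)\in[0,n]^2: iy+j(1-y)\le h\}$, and the task reduces to showing $\mathrm{Area}(P)\ge nh$. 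The paper does this by a direct computation, splitting into the case where $P$ is a trapezium ($h\le (1-y)n$) and a pentagon ($h>(1-y)n$); in both cases the hypothesis $h\ge n/2$ is exactly what is needed to push the inequality through. Without this lattice-point/area reduction your proposal has no mechanism for producing the constant $h/n$, and the ``optimize how much to hold back'' route you suggest would, if it works at all, give a worse constant.
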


\begin{proof}
Consider the Hider strategy of placing both objects at depth $1$ in a randomly chosen location. Clearly the Searcher cannot win with probability greater than $\lfloor h \rfloor /n$, so $V \le \lfloor h \rfloor /n$.

Now consider the Searcher strategy which digs up to depth $1$ in each location in a random order until the first object is found at depth $y$ (if found at all), after which he digs to depth $1-y$ in as many as possible of the remaining locations in a random order. We show that this strategy wins against any Hider strategy with probability at least $h/n-2/n$.

First suppose the two objects are hidden in the same location. Then the Searcher will be sure to find them both if he searches that location before running out of energy. This happens with probability $\lfloor h \rfloor /n$.

Now suppose the objects are in two different locations. We may as well assume the objects are at depths $y$ and $1-y$ for some $y \le 1/2$, since any other strategy is dominated. Suppose the object at depth $y$ is at the $i$th
location in the Searcher's random ordering and the object at depth $1-y$ is at the $j$th location. 

First suppose $i<j$. Then the Searcher wins if
\begin{align*}
i-1 + y + (1-y)(j-i) &\le h \mbox{ or}\\
iy + j(1-y) & \le h+1-y. 
\end{align*}
Similarly, if $i>j$ then the Searcher wins if
\[
iy + j(1-y) \le h+y.
\]
Hence the Searcher will certainly win if $i$ and $j$ satisfy 
\begin{equation}
iy + j(1-y)  \le h.
\label{eq:i-j}
\end{equation}
To calculate the probability $p$ that the Searcher wins amounts to finding the proportion of non-negative integer coordinates $(i,j)$ with $i,j \le n$ satisfying (\ref{eq:i-j}). This is approximately equal to the ratio of the area $A$ of the polygon $P$ in the positive quadrant of the $(i,j)$-plane bounded by the inequalities $i \le n$, $j \le n$ and (\ref{eq:i-j}), and $n^2$. More precisely, it is easy to see that $p \ge (A - 2n)/n^2 = A/n^2 - 2/n$, so it is sufficient to show that $A \ge nh$.

We split the analysis into the case that $h \le (1-y)n$ and $h > (1-y)n$. In the former case, the polygon $P$ is a trapezium and the area $A$ is given by
\begin{align*}
A &= \frac{n}{2}\left( \frac{h}{1-y} + \frac{h-ny}{1-y} \right)\\
&= n \left( \frac{h-ny/2}{1-y}\right)\\
& \ge n \left(\frac{h-hy}{1-y} \right) \mbox{ (since $h \ge n/2$)}\\
& = nh,
\end{align*}
as desired. 

In the case that $h > (1-y)n$, the polygon $P$ is a pentagon and we calculate $A$ by deducting from $n^2$ the area of the triangle formed by taking the complement of $P$:
\begin{align*}
A &= n^2 - \frac{1}{2}\left( n - \frac{h-ny}{1-y} \right) \left( n - \frac{h-n(1-y)}{y} \right)\\
&= n^2 - \frac{1}{2}  \left( \frac{(n-h)^2}{y(1-y)} \right)\\
& \ge n^2 - 2n(n-h)^2 \mbox{ (minimized at $y=1/2$)}\\
& = hn + (n-2h)(h-n)\\
&\ge hn \mbox{ (since $1/2 \le h \le 1$),}  
\end{align*}
as desired.

Hence we have
\[
h/n -2/n \le V \le \lfloor h \rfloor /n,
\]
and the theorem follows.
\end{proof}

Our other result in this section gives the exact value of the game for arbitrary $n$ and $k$ in the special case that $h < 1 + 1/k$. Before stating the result we define strategies for the players that will turn out to be optimal.

\begin{definition}
A \textbf{uniform allocation strategy} for the Hider is a placement of $k_i$ objects at depths $1/k,2/k,\ldots,k_i/k$ in each location $i$, for some choice of non-negative integers $k_1,\ldots,k_n$ summing to $k$. The \textbf{random allocation strategy} for the Hider is a uniformly random choice between all possible uniform allocation strategies.

A \textbf{uniform distribution strategy} for the Searcher is a strategy that digs in $L_i$ until finding $k_i$ objects (or running out of energy) for some choice of non-negative integers $k_1,\ldots,k_n$ summing to $k$. The \textbf{random distribution strategy} for the Searcher is a uniformly random choice between all possible random distribution strategies.
\end{definition}

These strategies are generalizations of the optimal strategies presented in Section \ref{sec:main} for $n=2,k=2$ and $h<3/2$, and the following proposition generalizes the solution of that case. It also generalizes 3.6.2 from \cite{OpDenKelder}.

\begin{proposition}
Suppose $h < 1 + 1/k$. The uniform allocation strategy is optimal for the Hider and the uniform distribution strategy is optimal for the Searcher. The value $V$ of the game is given by
\[
V=\frac{1}{\binom{n+k-1}{k}}.
\]
\end{proposition}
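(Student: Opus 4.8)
The plan is to prove two matching bounds: that the random allocation strategy guarantees the Hider a payoff of at most $1/\binom{n+k-1}{k}$, and that the random distribution strategy guarantees the Searcher a payoff of at least $1/\binom{n+k-1}{k}$. Since these coincide, the common value is $1/\binom{n+k-1}{k}$ and both strategies are optimal. The first ingredient is a counting remark: both a uniform allocation strategy and a uniform distribution strategy are determined by a vector $(k_1,\dots,k_n)$ of non-negative integers with $\sum_i k_i=k$, and there are exactly $N:=\binom{n+k-1}{k}$ such vectors, so the random allocation strategy is the uniform mixture over $N$ pure Hider strategies and the random distribution strategy is the uniform mixture over $N$ pure Searcher strategies.

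For the Searcher's bound I would show that against an arbitrary pure Hider strategy $(S_1,\dots,S_n)$ (with $|S_i|$ objects buried in $L_i$ at the depths in $S_i$, so $\sum_i \max S_i \le 1$) at least one uniform distribution strategy wins, namely the one with $k_i=|S_i|$: digging in each $L_i$ until $|S_i|$ objects are found amounts to digging to depth $\max S_i$ there, so all $k$ objects are recovered after total digging $\sum_i \max S_i \le 1 \le h$. Hence the random distribution strategy wins with probability at least $1/N$ against every pure Hider strategy, giving $V\ge 1/N$. (This direction uses only $h\ge 1$.)

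The crux is the opposite direction, where the hypothesis $h<1+1/k$ enters. I would show that no pure Searcher strategy finds all objects against more than one uniform allocation, so against the uniform mixture it wins with probability at most $1/N$ and $V\le 1/N$. Fix a pure Searcher strategy that beats the uniform allocation $A=(k_1,\dots,k_n)$, and in the resulting play let $d_i$ be the depth dug in $L_i$ at the first moment all $k$ objects have been found; then $d_i\ge k_i/k$ (the deepest object of $L_i$ is at depth $k_i/k$) and $\sum_i d_i\le h<1+1/k$. Since $\sum_i k_i/k=1$, the total overdigging $\sum_i(d_i-k_i/k)$ is strictly less than $1/k$, so $d_i<(k_i+1)/k$ for every $i$: the Searcher never digs down to the empty layer at depth $(k_i+1)/k$, hence every depth of the form $j/k$ that he crosses in $L_i$ carries an object — call such a play \emph{gap-free}. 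If the same strategy also beats $A'=(k'_1,\dots,k'_n)$ then the play against $A'$ is gap-free for the same reason, and the key step is that two gap-free plays of one strategy coincide: arguing by induction on the (finitely many) times at which objects are found, as long as the plays have agreed so far the Searcher, whose behaviour depends only on which objects he has found and when, takes the same action in both, and any layer $j/k$ he then crosses in some $L_i$ carries an object in both plays — in the play against $A$ because it is gap-free, and in the play against $A'$ because otherwise that play would not be gap-free — so the two plays continue to agree. Thus they have the same final profile, whence $d_i\ge \max(k_i,k'_i)/k$; summing, $\sum_i\max(k_i,k'_i)/k\le\sum_i d_i<1+1/k$, so $\sum_i\max(k_i,k'_i)\le k$, while $\sum_i\max(k_i,k'_i)\ge\sum_i k_i=k$ with equality forcing $k_i\ge k'_i$ for all $i$ and, symmetrically, $k'_i\ge k_i$ for all $i$; hence $A=A'$.

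I expect the main obstacle to be making the ``two gap-free plays coincide'' step fully rigorous, i.e. formalising that an adaptive Searcher extracts no usable information while he never meets an empty layer; the induction should track that the set of found objects, the order in which they were found, and the current depth vector all agree in the two plays, and should handle the continuous digging carefully (the Searcher learns that a layer is empty only on passing through it, which by gap-freeness never happens). Once both bounds are established, $V=1/N$ and the optimality of the random allocation and random distribution strategies follow at once.
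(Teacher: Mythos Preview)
Your proposal is correct and follows the same approach as the paper: bound $V$ above via the random allocation strategy and below via the random distribution strategy, using the count $N=\binom{n+k-1}{k}$ of weak compositions. The only difference is one of detail: the paper simply asserts that ``the Searcher cannot check more than one of the Hider's uniform allocation strategies'', whereas you supply a genuine proof of this via the gap-free argument (which is where the hypothesis $h<1+1/k$ is actually used). Your induction works as stated once one notes that between consecutive object discoveries the Searcher crosses no depth of the form $j/k$ at all (gap-freeness of the $A$-play rules out empty layers, and occupied layers would themselves be discoveries), so the $A'$-play cannot diverge earlier; and since fewer than $k$ objects have been found, the $A'$-play has not yet terminated, so its final depth in $L_i$ is at least the current depth, forcing $j\le k'_i$ by gap-freeness of the $A'$-play.
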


\begin{proof}
Let $N$ be the number of uniform allocation strategies, which is the same as the number of uniform distribution strategies. It is clear that the Searcher cannot check more than one of the Hider's uniform allocation strategies, so by using the uniform allocation strategy the Hider can ensure that the payoff is no more than $1/N$. On the other hand, however the objects are distributed, the Searcher will correctly guess the number of objects that are present in each location with probability $1/N$, so he can ensure the payoff is at least $1/N$. Hence the value is $1/N$.

It remains to show that $N=\binom{n+k-1}{k}$, that is the number of ways of choosing $n$ non-negative integers $k_1,\ldots,k_n$ that sum to $k$ is $\binom{n+k-1}{k}$. This is the problem of finding the number of \emph{weak compositions} of $k$ into exactly $n$ parts, which is well-known to be equal to $\binom{n+k-1}{k}$.
\end{proof}

\section{Conclusion and variants}

Our solution to the open problem posed in \cite{Fokkink-et-al} shows that optimal strategies can be complex and do not seem to follow any easily recognizable pattern. We expect that to find solutions to the game with $2$ objects hidden in $5$ or more locations will not give any further insight into the structure of the game, and we believe a better avenue for future research is to look at limit games, as in the current work of the first author \cite{Csoka}.

We note that in all four lemmas, the Hider's optimal strategy has the property described in the conjecture for caching games made in \cite{Fokkink-et-al} that after the first object is found, the remaining object is hidden optimally in the subgame. However, we know from \cite{Csoka} that the optimal Hider strategy does not necessarily have this property for large $n$ so that the conjecture is untrue in general. The conjecture has not been disproved in a variation of the game called the \emph{extremal game} in which the Hider must dig a total depth of exactly $1$. Since the Hider's optimal strategy has this property for all solved cases of the original game, the solution of the extremal game must be the same in these cases.

Another direction that future research could go in would be to look at a variation of the game where the Searcher cannot adapt his strategy but must specify in advance how deep he wishes to dig in each location. Some elementary analysis of this version of the game can be found in \cite{AFOL} and \cite{AFLC}, but we believe a lot more could be done in this area.

Finally, we mention two other possible extensions to the game, which were suggested by the Associate Editor. In both these extensions, it is possible that the Hider's optimal strategy has the structure conjectured in \cite{Fokkink-et-al}, and we leave these possibilities as questions for future research.

The first extension is a version in which the Searcher is restricted to searching the locations sequentially, so that once he has dug in so{\tiny }me location, he is not permitted to return to it. In many of the known solutions of the game the Searcher's behavior follows this requirement anyway, but in general the added restriction may decrease the value of the game, and a pattern may be more easily detectable. However it is not clear whether this version of the game would be any easier to analyze than the original version.

The second extension we mention is a discretized version of the game, for example when the Hider is restricted to placing the objects at depths $1/2$ or $1$. For $n=2$ the solution remains the same, but in general this restriction on the Hider will increase the value of the game, and hopefully simplify the analysis due to the Hider's strategy space being finite. If this problem is more tractable, it could be extended to a version in which the Hider can place objects at depths $1/k,2/k,\cdots,k/k$ for some fixed parameter $k$. Letting $k$ tend to $\infty$ could give some insight into the original problem.

\medspace

\textbf{Acknowledgement.} Endre Cs\'{o}ka was supported by the European Research Council (grant agreements no. 306493 and 648017).

\end{document}